\documentclass[12pt]{amsart}

\usepackage{fullpage}
\usepackage{amsfonts}
\usepackage{verbatim}
\newcommand{\swoosh}{\includegraphics[width=0.35in]{swoosh.pdf}}

\newcommand{\R}{\mathbb{R}}

\newcommand{\Z}{\mathbb{Z}}

\newcommand{\T}{\mathbb{T}}

\newcommand{\Hil}{\mathcal{H}}
\newcommand{\Kil}{\mathcal{K}}

\providecommand{\norm}[1]{\lVert#1\rVert}

\numberwithin{equation}{section}
\theoremstyle{plain} 

\newtheorem{thm}{Theorem}[section]
\newtheorem{cor}[thm]{Corollary}

\newtheorem{prop}[thm]{Proposition}

\theoremstyle{definition}
\newtheorem{defn}{Definition}[section]

\theoremstyle{remark}
\newtheorem{remark}{Remark}[section]

\begin{document}

\title{Orthonormal dilations of non-tight frames}

\author{Marcin Bownik}
\author{John Jasper}

\address{Department of Mathematics, University of Oregon, Eugene, OR 97403--1222, USA}

\email{mbownik@uoregon.edu, jjasper@uoregon.edu}

\begin{abstract}
We establish dilation theorems for non-tight frames with additional structure, i.e., frames generated by unitary groups of operators and projective unitary representations. This generalizes previous dilation results for Parseval frames due to Han and Larson \cite{hl} and Gabardo and Han \cite{GH}. We also extend the dilation theorem for Parseval wavelets, due to Dutkay, Han, Picioroaga, and Sun \cite{dhps}, by identifying the optimal class of frame wavelets for which dilation into an orthonormal wavelet is possible.
\end{abstract}

\author{Darrin Speegle}

\address{Department of Mathematics and Computer Science, Saint Louis 
University, 221 N. Grand Blvd.,
St. Louis, MO 63103, USA}

\email{speegled@slu.edu}

\keywords{frame,  orthonormal dilation, projective unitary representation, Gabor system, Baumslag-Solitar group, wavelet}

\thanks{The first two authors
were partially supported by NSF grant DMS-0653881.}

\subjclass[2000]{Primary: 42C15, 47B15, Secondary: 46C05}
\date{\today}

\maketitle

\section{Introduction}

\begin{defn} A sequence $\{f_{i}\}_{i\in I}$ in a Hilbert space $\Hil$ is called a {\it frame} if there exist $0<A\leq B<\infty$ such that
\begin{equation}\label{frame}A\norm{f}^{2}\leq\sum |\langle f,f_{i}\rangle|^{2}\leq B\norm{f}^{2}
\qquad\text{for all $f\in\Hil$}.
\end{equation}
The numbers $A$ and $B$ are called the {\it frame bounds}. The supremum over all $A$'s and infimum over all $B$'s which satisfy \eqref{frame} are called the {\it optimal frame bounds}. If $A=B$, then $\{f_i\}$ is said to be a {\it tight frame}. In addition, if $A=B=1$, then $\{f_i\}$ is called a {\it Parseval frame}.
\end{defn}

The most basic dilation result for frames says that a Parseval frame in a Hilbert space $\mathcal H$ is an image of an orthonormal basis under an orthogonal projection of some larger Hilbert space $\mathcal K \supset \mathcal H$ onto $\mathcal H$. This is now a classical fact which can be attributed to Han and Larson \cite{hl}, who also proved the following result. If the Parseval frame has some additional structure, i.e., it is generated by an action of a unitary group of operators on $\mathcal H$, then the corresponding orthonormal basis is also generated by a unitary group of operators on $\mathcal K$. Gabardo and Han \cite{GH} proved that similar results hold for group-like unitary operator systems, which include Gabor systems. Even more general frames generated by projective unitary representations were studied by Han and Larson \cite{HL08}. Another remarkable result in this direction is due to Dutkay, Han, Picioroaga, and Sun \cite{dhps} who established a dilation theorem for Parseval wavelets.

Despite this progress, much less was known about dilation results for non-tight frames. Han and Larson \cite[Proposition 1.6]{hl} proved that any frame is an image of a Riesz basis under an orthogonal projection, and that the frame and Riesz bounds are the same. Recently, Bownik and Jasper \cite[Proposition 2.3]{bj} proved a dilation result for non-tight frames which is also implicitly contained in the work of Antezana, Massey, Ruiz, and Stojanoff \cite[Proposition 4.5]{amrs}. This result says that any frame with frame bounds $A$ and $B$ is the image of an orthonormal basis under a positive (self-adjoint) operator with spectrum contained in $\{0\} \cup [\sqrt{A}, \sqrt{B}]$. In the case of Parseval frames, when $A=B=1$, this easily reduces to the classical Han and Larson dilation theorem.

The goal of this paper is to extend the dilation theorem \cite{bj} to non-tight frames with some additional structure. We mainly concentrate on frames and orthonormal bases generated either by an action of a unitary group or by a projective unitary representation. We show that any such frame is the image of an orthonormal basis under a positive operator with suitable spectrum.  Moreover, it is possible to choose the orthonormal basis so as to have the same structure as the frame.  In the case of Parseval frames, the positive operator is actually a projection; hence, our results generalize those of Han, Gabardo, and Larson. Finally, we also extend the dilation  theorem for Parseval wavelets in \cite{dhps}, albeit with an additional twist. It turns out that orthonormal dilation is possible only for a proper subset of frame wavelets, which contains all Parseval wavelets. We characterize the class of frame wavelets for which a generalization of \cite[Theorem 2.6]{dhps} holds.

\section{Frames vectors for unitary groups}

In this section we establish the dilation theorem for frame vectors generated by a unitary group. Theorem \ref{ug} generalizes the result of Han and Larson \cite[Theorem 3.8]{hl} from the setting of Parseval frames to general non-tight frames.

\begin{defn}
Let $\mathcal{U}$ be a set of unitary operators on a Hilbert space $\Hil$. We say that $f\in\Hil$ is a {\it frame vector} for $\mathcal U$  if $\{Uf\}_{U\in\mathcal U}$ is a frame for $\Hil$. We say that $e\in \Hil$ is a complete wandering vector if  $\{Ue\}_{U\in\mathcal U}$ is an orthonormal basis of $\Hil$.
\end{defn}

\begin{thm}\label{ug} Let $\mathcal{U}$ be a group of unitary operators on a Hilbert space $\Hil$. Suppose that $f\in\Hil$ is a frame vector for $\mathcal U$ with optimal bounds $A^{2}$ and $B^{2}$.
Then, there exist:
\begin{enumerate}
\item 
a Hilbert space $\mathcal{K}\supseteq\Hil$ and a unitary group $\mathcal{V}$ on $\mathcal{K}$, such that the restriction map $\mathcal V \ni V \mapsto V|_{\mathcal H}$ is a group isomorphism of $\mathcal V$ onto $\mathcal U$,
\item
a positive operator $E:\mathcal{K}\to\Hil$ such that $\{A,B\}\subseteq\sigma(E|_{\mathcal H})\subseteq [A,B]$, and
\item
a complete wandering vector $e\in\mathcal K$ for $\mathcal V$ such that $E\mathcal Ve=\mathcal Vf =\mathcal Uf$.
\end{enumerate}
More precisely, we have
$
E(Ve) =Vf$ for all $V \in\mathcal V$.
\end{thm}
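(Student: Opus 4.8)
The plan is to realize everything inside the model space $\ell^2(\mathcal U)$ carrying the left regular representation, and to transport the square root of the frame operator across an equivariant polar decomposition. First I would introduce the analysis operator $\Theta\colon\mathcal H\to\ell^2(\mathcal U)$, $(\Theta h)_U=\langle h,Uf\rangle$, whose adjoint is the synthesis operator $\Theta^*\delta_U=Uf$. The frame operator $S=\Theta^*\Theta$ is then positive with $\sigma(S)\subseteq[A^2,B^2]$, and since $A^2,B^2$ are the \emph{optimal} bounds, $A^2=\min\sigma(S)$ and $B^2=\max\sigma(S)$. In particular $S\geq A^2I$, so $\Theta$ is bounded below and hence injective with closed range. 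A direct computation using $\langle Uh,Vf\rangle=\langle h,U^{-1}Vf\rangle$ shows that $\Theta$ intertwines $\mathcal U$ with the left regular representation $\lambda$, i.e.\ $\Theta U=\lambda(U)\Theta$, and consequently $S$ lies in the commutant of $\mathcal U$.

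Next I would take the polar decomposition $\Theta=WS^{1/2}$. Because $S$ is invertible, $W$ is an isometry of $\mathcal H$ onto the closed subspace $\mathcal H_0:=\Theta(\mathcal H)\subseteq\ell^2(\mathcal U)$, with $W=\Theta S^{-1/2}$ and $W^*=S^{-1/2}\Theta^*$. The crucial point is that this decomposition is $\mathcal U$-equivariant: since $S^{1/2}$ commutes with $\mathcal U$ and has dense range, the relation $\Theta U=\lambda(U)\Theta$ forces $WU=\lambda(U)W$, so $\mathcal H_0$ is $\lambda(\mathcal U)$-invariant and $W$ conjugates $\mathcal U$ into $\lambda(\mathcal U)|_{\mathcal H_0}$. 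I would now set $\mathcal K=\ell^2(\mathcal U)$, $\mathcal V=\lambda(\mathcal U)$, and identify $\mathcal H$ with $\mathcal H_0$ via $W$. Under this identification the restriction map $\lambda(U)\mapsto\lambda(U)|_{\mathcal H_0}=WUW^*$ corresponds to $\lambda(U)\mapsto W^*\lambda(U)W=U$, which is the required group isomorphism of $\mathcal V$ onto $\mathcal U$; and $e:=\delta_I$ (the point mass at the group identity) is a complete wandering vector, since $\{\lambda(U)\delta_I\}_{U\in\mathcal U}=\{\delta_U\}_{U\in\mathcal U}$ is the standard orthonormal basis of $\ell^2(\mathcal U)$.

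Finally I would define the dilation operator $E:=WS^{1/2}W^*$ on $\mathcal K$. This is self-adjoint and positive with range contained in $\mathcal H_0$, so $E$ annihilates $\mathcal H_0^\perp$, and $E|_{\mathcal H_0}$ is unitarily equivalent to $S^{1/2}$; hence $\sigma(E|_{\mathcal H})=\{\sqrt{t}:t\in\sigma(S)\}\subseteq[A,B]$ with $A,B\in\sigma(E|_{\mathcal H})$ by optimality of the bounds. The intertwining $WU=\lambda(U)W$ gives $W^*\lambda(U)=UW^*$, whence $E\lambda(U)=WS^{1/2}UW^*=\lambda(U)WS^{1/2}W^*=\lambda(U)E$, so $E$ commutes with $\mathcal V$. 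Evaluating at the wandering vector, $W^*\delta_I=S^{-1/2}\Theta^*\delta_I=S^{-1/2}f$, so $E\delta_I=WS^{1/2}S^{-1/2}f=Wf$, and therefore $E(\lambda(U)\delta_I)=\lambda(U)Wf=W(Uf)$, which under the identification $\mathcal H\cong\mathcal H_0$ is precisely $Uf$. This yields the desired relation $E(Ve)=Vf$ for all $V\in\mathcal V$.

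I expect the main obstacle to be arranging the construction to be genuinely $\mathcal U$-equivariant. The spectral and frame-bound bookkeeping is routine, but one must verify that the positive part $S^{1/2}$ commutes with the group and that the partial isometry $W$ intertwines the two actions, so that $E$ descends to an operator commuting with $\lambda(\mathcal U)$ and carrying the wandering orbit onto the frame. Once equivariance of the polar decomposition is secured, matching $\{A,B\}\subseteq\sigma(E|_{\mathcal H})\subseteq[A,B]$ to the optimal frame bounds is immediate from the unitary equivalence of $E|_{\mathcal H_0}$ with $S^{1/2}$.
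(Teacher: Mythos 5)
Your proof is correct and is essentially the paper's own argument in different packaging: your polar isometry $W=\Theta S^{-1/2}$ is precisely the analysis operator $\Phi$ of the canonical Parseval frame $\{S^{-1/2}Uf\}_{U\in\mathcal{U}}$ that the paper uses, and your dilation $E=WS^{1/2}W^{*}$ coincides with the paper's $E=\Phi S^{1/2}\Phi^{*}$, acting on the same model space $\ell^{2}(\mathcal{U})$ with the same wandering vector $e_{I}$. The only cosmetic difference is the order of steps: you prove the intertwining $\Theta U=\lambda(U)\Theta$ first and deduce $SU=US$, whereas the paper verifies $SU=US$ directly and then checks $\lambda_{U}\Phi=\Phi U$.
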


We will use the following standard terminology.

\begin{defn} If $\{f_{i}\}_{i\in I}$ is a frame we call the operator $T:\Hil\to\ell^{2}(I)$, given by
\begin{equation}\label{analysis}
Tf = \{\langle f,f_{i}\rangle\}_{i\in I}\end{equation}
the {\it analysis operator}. The adjoint $T^{\ast}:\ell^{2}(I)\to\Hil$ given by
\begin{equation}\label{synthesis}
T^{\ast}\big(\{a_{i}\}_{i\in I}\big)=\sum_{i\in I}a_{i}f_{i}\end{equation}
is called the {\it synthesis operator}. The operator $S=T^{\ast}T$ given by
\begin{equation}\label{frameop}
Sf = \sum_{i\in I}\langle f,f_{i}\rangle f_{i}\end{equation}
is called the {\it frame operator}.\end{defn}

The following is a standard fact about frames.

\begin{prop}\label{di1} If $\{f_{i}\}_{i\in I}$ is a frame for $\Hil$ with frame operator $S$, then $\{S^{-1/2}f_{i}\}_{i\in I}$ is a Parseval frame for $\Hil$. In this case, 
$\{S^{-1/2}f_{i}\}_{i\in I}$ is said to be the canonical Parseval frame of  $\{f_{i}\}_{i\in I}$.
\end{prop}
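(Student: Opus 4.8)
The plan is to show directly that the frame operator of the rescaled system $\{S^{-1/2}f_i\}_{i\in I}$ is the identity, from which the Parseval property is immediate. First I would record the standard consequence of \eqref{frame}: since $Sf = \sum_i \langle f,f_i\rangle f_i$, the two-sided inequality in \eqref{frame} is exactly the operator inequality $A\,I \le S \le B\,I$. In particular $S$ is a bounded, positive, self-adjoint operator bounded below by $A>0$, hence invertible with bounded inverse. By the continuous functional calculus applied to the positive operator $S$, the operator $S^{-1/2}$ is then well-defined, bounded, positive, and self-adjoint, and it satisfies $S^{-1/2} S\, S^{-1/2} = I$.

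Next I would verify that $\{S^{-1/2}f_i\}_{i\in I}$ is itself a frame, so that its analysis series converge and its frame operator is defined. Because $S^{-1/2}$ is self-adjoint we have $\langle f, S^{-1/2}f_i\rangle = \langle S^{-1/2}f, f_i\rangle$, whence $\sum_i |\langle f, S^{-1/2}f_i\rangle|^2 = \sum_i |\langle S^{-1/2}f, f_i\rangle|^2$; applying \eqref{frame} to the vector $S^{-1/2}f$ together with the boundedness and invertibility of $S^{-1/2}$ yields genuine frame bounds for the rescaled system.

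The heart of the argument is then a one-line computation of the frame operator $S'$ of $\{S^{-1/2}f_i\}$. Moving $S^{-1/2}$ across the inner product by self-adjointness and pulling the bounded operator $S^{-1/2}$ out of the (convergent) sum gives
\[
S'f = \sum_{i\in I} \langle f, S^{-1/2}f_i\rangle\, S^{-1/2}f_i
= S^{-1/2}\Big(\sum_{i\in I}\langle S^{-1/2}f, f_i\rangle f_i\Big)
= S^{-1/2} S\, S^{-1/2} f = f,
\]
so $S' = I$. I would conclude by observing that for any $f\in\Hil$ one has $\sum_{i\in I} |\langle f, S^{-1/2}f_i\rangle|^2 = \langle S'f, f\rangle = \norm{f}^2$, which is precisely \eqref{frame} with $A=B=1$; hence $\{S^{-1/2}f_i\}$ is a Parseval frame.

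There is no genuine obstacle here, the result being a routine consequence of the functional calculus and the identity $S^{-1/2} S\, S^{-1/2} = I$. The only point requiring mild care is the interchange of the bounded operator $S^{-1/2}$ with the infinite sum in the displayed computation; this is justified because the inner series $\sum_{i\in I}\langle S^{-1/2}f, f_i\rangle f_i$ converges in $\Hil$, being exactly $S(S^{-1/2}f)$, i.e.\ the original frame operator applied to the vector $S^{-1/2}f$.
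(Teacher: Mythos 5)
Your proof is correct and is precisely the standard argument: the paper states Proposition~\ref{di1} without proof as a standard fact, and your computation $S' = S^{-1/2}S\,S^{-1/2} = I$, together with the functional-calculus justification for $S^{-1/2}$ and the care taken in pulling $S^{-1/2}$ through the convergent sum, is exactly the classical proof being invoked. Nothing is missing.
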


For the purpose of the proof it is convenient to reformulate Theorem \ref{ug} in more explicit terms as follows. 
Let $\mathcal{U}$ be a group of unitary operators on a Hilbert space $\Hil$. Let $\{e_{U}\}_{U\in\mathcal{U}}$ be the coordinate basis of $\ell^2(\mathcal{U})$. For each $U\in\mathcal{U}$, we define the unitary operator $\lambda_U$ on $\ell^2(\mathcal{U})$ by $\lambda_{U}e_V=e_{UV}$. The map $U\mapsto\lambda_{U}$ is often called the left-regular unitary representation of $\mathcal U$. Define the unitary group $\mathcal{V}=\{\lambda_U:U\in\mathcal{U}\}$. Finally we are ready to state and prove the explicit version of Theorem \ref{ug}.

\begin{thm}\label{ug2} 
Suppose that $f\in\Hil$ is frame vector for $\mathcal U$ with optimal bounds $A^{2}$ and $B^{2}$.
Then,
\begin{enumerate}
\item 
there exists an isometry $\Phi:\Hil\to\ell^{2}(\mathcal{U})$ such that $\Phi^* \lambda_U \Phi = U$ for all $U \in\mathcal U$,
\item
there exists a positive operator $E:\ell^{2}(\mathcal{U})\to\Phi(\Hil)$ such that $\{A,B\}\subseteq\sigma(E|_{\Phi(\Hil)})\subseteq [A,B]$, and
\item
we have $E(\mathcal V e_I)=\Phi(\mathcal Uf)$, where $I$ is the identity on $\mathcal H$. More precisely, 
\[
E e_{U} =E \lambda_U e_I =\Phi Uf \qquad\text{for all }U \in\mathcal U.
\]
\end{enumerate}
Moreover, $E^2$ is the Gramian of $\mathcal U f$, i.e., $\langle E^2 e_U, e_V \rangle = \langle Uf, Vf\rangle$ for all $U,V \in \mathcal U$. In particular, $E^{2}$ is unitarily equivalent to $S\oplus\mathbf{0}$, where $S$ is the frame operator of $\mathcal Uf$ and $\mathbf{0}$ is the zero operator on $\Phi(\Hil)^{\bot}$.
\end{thm}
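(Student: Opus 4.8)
The plan is to build the larger Hilbert space concretely as $\ell^2(\mathcal U)$ and let the orthonormal basis be the coordinate basis $\{e_U\}$, which is automatically a complete wandering vector for the left-regular representation $\mathcal V = \{\lambda_U\}$. The first step is to construct the isometry $\Phi$ of part~(i). Since $f$ is a frame vector for $\mathcal U$, the analysis operator $T\colon \Hil \to \ell^2(\mathcal U)$, $Tg = \{\langle g, Uf\rangle\}_{U}$, is bounded and bounded below. I would show that $\Phi := S^{-1/2}$-normalized analysis map intertwines $U$ with $\lambda_U$: concretely, taking $\Phi$ to be the analysis operator of the \emph{canonical Parseval frame} $\{S^{-1/2}Uf\}_U$ (Proposition~\ref{di1}), one checks $\Phi g = \{\langle g, S^{-1/2}Uf\rangle\}_U$ is an isometry onto its range, and that the group structure $Uf$ under $\mathcal U$ translates into the shift $\lambda_U$, giving $\Phi^*\lambda_U\Phi = U$. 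The intertwining identity reduces to the covariance relation $S U = U S$ of the frame operator with the group, which holds because $\mathcal U$ is a group acting on the frame vectors by permutation.

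The second and central step is to define $E$ and compute its spectrum. The natural candidate is $E := T\Phi^* = T (S^{-1/2}T)^* $ restricted appropriately, but more cleanly I would define $E$ as the positive square root of the Gramian operator $G$ on $\ell^2(\mathcal U)$ whose matrix entries are $\langle G e_U, e_V\rangle = \langle Uf, Vf\rangle$. The key algebraic fact is that $G = TT^*$, where $T$ is the analysis operator of $\mathcal Uf$; hence $G$ is the Gramian, it is positive, its range is exactly $\Phi(\Hil)$ (since $TT^*$ and $T^*T = S$ share nonzero spectrum), and $E := G^{1/2}$ is positive with $E^2 = G$. This immediately yields the last two displayed assertions: $E^2$ is the Gramian by construction, and because $TT^*$ and $T^*T=S$ have the same nonzero spectrum with the zero part supported on $\ker T^* = \Phi(\Hil)^\perp$, we get $E^2 \cong S \oplus \mathbf 0$. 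The spectral containment $\sigma(E|_{\Phi(\Hil)}) \subseteq [A,B]$ with $\{A,B\}$ attained then follows by the spectral mapping theorem $t \mapsto \sqrt t$ applied to $S$, whose optimal frame bounds are $A^2$ and $B^2$ means $\sigma(S) \subseteq [A^2, B^2]$ with both endpoints in the spectrum.

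The third step is to verify the dilation identity $E e_U = \Phi Uf$ of part~(iii). The quickest route is to check it on the synthesis side: writing $E = G^{1/2}$ and using $G = TT^*$, one sees $E e_U$ lands in $\Phi(\Hil)$ and that $\langle E e_U, \Phi v\rangle = \langle \Phi Uf, \Phi v\rangle$ for all $v \in \Hil$ by a direct Gramian computation, using that $\Phi$ is the canonical Parseval analysis map so that $\Phi^*\Phi = I_{\Hil}$ and $\Phi\Phi^*$ is the orthogonal projection onto $\Phi(\Hil)$. Combined with $E e_U = E\lambda_U e_I$ (which is just $\lambda_U e_I = e_U$) this gives the covariant form $E(Ve) = Vf$ after transporting back through $\Phi$.

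The main obstacle I anticipate is the bookkeeping that ties together three different incarnations of the frame operator living on two spaces: $S = T^*T$ on $\Hil$, the Gramian $G = TT^*$ on $\ell^2(\mathcal U)$, and the restriction $E|_{\Phi(\Hil)}$. The crux is recognizing that $G$ and $S$ are unitarily equivalent on their nonzero parts (the standard fact that $T^*T$ and $TT^*$ are isospectral away from $0$, with the intertwiner furnished by $\Phi$) and that $\Phi(\Hil) = \overline{\operatorname{ran}} \, G$; getting the domain/codomain of $E$ to land exactly in $\Phi(\Hil)$, rather than merely in $\ell^2(\mathcal U)$, is where care is needed. Once the identification $G = TT^*$ is made explicit, the positivity, the spectral statement, and the dilation identity all fall out, and the group-covariance of everything is inherited automatically because $\lambda_U$ permutes the basis vectors $e_V$ exactly as $U$ permutes the frame vectors $Vf$.
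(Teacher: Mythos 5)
Your proposal is correct, and it ends up constructing exactly the same objects as the paper, but in the reverse logical order. Part (i) is identical: both you and the paper take $\Phi$ to be the analysis operator of the canonical Parseval frame $\{S^{-1/2}Uf\}_{U\in\mathcal U}$ and obtain the intertwining $\Phi U = \lambda_U\Phi$ from the commutation $SU=US$. The difference is in how $E$ is born. The paper \emph{defines} $E:=\Phi S^{1/2}\Phi^*$, verifies $Ee_U=\Phi Uf$ by a one-line computation (using $\Phi^*e_U=S^{-1/2}Uf$), then deduces that $E^2$ is the Gramian and reads off the spectral statement (ii) from the frame inequality $A^2\norm{g}^2\leq\norm{Eg}^2\leq B^2\norm{g}^2$ on $\Phi(\Hil)$, finishing with an explicit unitary implementing $E^2\cong S\oplus\mathbf 0$. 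You instead \emph{define} $E:=G^{1/2}$ with $G=TT^*$ the Gramian, so the ``moreover'' clauses come essentially for free from the standard isospectrality of $TT^*$ and $T^*T=S$ (the intertwiner being $\Phi$, i.e., the polar decomposition $T=\Phi S^{1/2}$), and you get (ii) by spectral mapping rather than from the frame inequality. Your route buys a cleaner conceptual picture, at the cost that part (iii) stops being a direct computation: one cannot evaluate $\langle G^{1/2}e_U,\Phi v\rangle$ from the matrix of $G$ alone, so the ``direct Gramian computation'' you allude to must really be the uniqueness-of-positive-square-root argument, namely $(\Phi S^{1/2}\Phi^*)^2=\Phi S\Phi^*=TT^*=G$ together with $\Phi S^{1/2}\Phi^*\geq 0$, which forces $E=G^{1/2}=\Phi S^{1/2}\Phi^*$; after that, $Ee_U=\Phi S^{1/2}S^{-1/2}Uf=\Phi Uf$ as required. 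That identification shows your $E$ is literally the paper's operator, so the two proofs converge; make it explicit and your argument is complete.
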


Note that Theorem \ref{ug2} immediately implies Theorem \ref{ug} by identifying $\Hil$ with $\Phi(\Hil)$, by letting $\mathcal K=\ell^2(\mathcal U)$, $\mathcal V = \{\lambda_U: U \in \mathcal U\}$, and by setting $e=e_I$, which is a complete wandering vector for $\mathcal{V}$. 

\begin{proof} Let $S$ be the frame operator of $\mathcal{U}f$. By Proposition \ref{di1}, $S^{-1/2}\mathcal{U}f$ is a Parseval frame. For $U\in\mathcal{U}$ set $p_{U}=S^{-1/2}Uf$, and let $\Phi$ be the analysis operator of $\{p_{U}\}_{U\in\mathcal{U}}$, that is
\[
\Phi g = \sum_{U\in\mathcal{U}}\langle g,p_{U}\rangle e_{U}\qquad\text{for all }g\in\Hil.
\]
Since $\{p_{U}\}_{U\in\mathcal{U}}$ is a Parseval frame, $\Phi$ is an isometry.
First, we show that $SU=US$ for each $U\in\mathcal{U}$. Indeed, for $g\in\Hil$
\begin{align*}
USg & =U\left(\sum_{V\in\mathcal{U}}\langle g,Vf\rangle Vf\right) = \sum_{V\in\mathcal{U}}\langle g,Vf\rangle UVf = \sum_{V\in\mathcal{U}}\langle Ug,UVf\rangle UVf  = SUg.
\end{align*}
Since $S$ is self-adjoint, we also have $S^{-1/2}U=US^{-1/2}$ for each $U\in\mathcal{U}$. Thus, for $g\in\Hil$
\begin{align*}
\lambda_U \Phi g & = \lambda_U\left(\sum_{V\in\mathcal{U}}\langle g,S^{-1/2}Vf\rangle e_V\right)
 = \sum_{V\in\mathcal{U}}\langle Ug,US^{-1/2}Vf\rangle e_{UV}\\
 & = \sum_{V\in\mathcal{U}}\langle Ug,S^{-1/2}UVf\rangle e_{UV} = \sum_{V\in\mathcal{U}}\langle Ug,p_{UV}\rangle e_{UV} = \Phi Ug.
\end{align*}
Since $\Phi^*\Phi$ is the identity on $\Hil$, this shows (i).

Next, note that $\Phi^{\ast}$ is the synthesis operator of $\{p_{U}\}_{U \in\mathcal U}$ given by
\[
\Phi^{\ast}g = \sum_{U\in\mathcal{U}}\langle g,e_{U}\rangle p_{U}\qquad\text{for all }g\in\ell^{2}(\mathcal{U}).
\]
Hence, we have $\Phi^{\ast}e_{U} = p_{U}$. Define $E=\Phi S^{1/2}\Phi^{\ast}$. Then, for any $U\in\mathcal U$ we have 
\begin{equation}\label{ug5}
E e_{U}  = \Phi S^{1/2}\Phi^* e_{U} = \Phi S^{1/2}p_{U}=\Phi S^{1/2}S^{-1/2}Uf=\Phi Uf,
\end{equation}
which proves (iii). Hence, for $U,V \in \mathcal U$,
\[
\langle E^2 e_U, e_V \rangle = \langle E e_U, E e_V \rangle = \langle \Phi Uf, \Phi Vf \rangle = \langle Uf, Vf \rangle.
\]
This shows that $E^2$ is the Gramian of $\mathcal Uf$.

Moreover, using \eqref{ug5} we have
\[\norm{Eg}^{2} = \sum_{U\in\mathcal{U}}|\langle Eg,e_{U}\rangle|^{2} = \sum_{U\in\mathcal{U}}|\langle g,Ee_{U}\rangle|^{2} = \sum_{U\in\mathcal{U}}|\langle g,\Phi Uf \rangle|^{2}.\]
Since $\Phi$ is unitary, $\Phi(\mathcal{U}f)$ is a frame for $\Phi(\Hil)$ with optimal frame bounds $A^{2}$ and $B^{2}$. The frame property now implies 
\[
A^{2}\norm{g}^{2}\leq\norm{Eg}^{2}\leq B^{2}\norm{g}^{2}
\qquad\text{for all }g\in \Phi(\Hil),
\]
which shows (ii).

Finally, to show the last part of Theorem \ref{ug2}, we define $U_{0}:\Hil\oplus\Phi(\Hil)^{\bot}\to\ell^{2}(\mathcal U)$ by
\[U_{0}g=\left\{\begin{array}{ll} \Phi g & g\in\Hil,
\\ g & g\in\Phi(\Hil)^{\bot}.\end{array}\right.\]
It is clear that $U_{0}$ is unitary, since $\Phi:\Hil\to\Phi(\Hil)$ is an isometric isomorphism. Since $\Phi^*\Phi$ is the identity on $\Hil$,
\[
E^{2}=\Phi S^{1/2}\Phi^* \Phi S^{1/2}\Phi^* =\Phi S\Phi^*.
\]
Hence,
\[E^{2}U_{0}g =
\begin{cases} E^{2}\Phi g = \Phi Sg = U_{0}Sg, & g\in\Hil
, \\
 E^{2}g = \Phi S\Phi^*g =  0 = U_{0}{\bf{0}}g &g\in\Phi(\Hil)^{\bot}.
 \end{cases}
\]
Thus, $E^2=U_0 (S \oplus \mathbf{0}) U_0^*$, which completes the proof of Theorem \ref{ug2}.
\end{proof}

\section{Frame vectors for projective unitary representations}

In this section we establish a variant of Theorem \ref{ug} in the context of projective unitary representations \cite{HL08}. Initially, Han and Larson formulated their dilation theorem in terms of unitary group representations in \cite[Theorem 3.8']{hl}. Since this setting does not include Gabor systems, Gabardo and Han \cite{GH} established the dilation theorem for group-like unitary systems, which covers Gabor systems as a special case. Recently, Han and Larson \cite{HL08}  adapted these arguments to the even more general setting of projective unitary representations \cite{Va}.

Let $G$ be a countable group and $\mathcal U(\mathcal H)$ be the group of unitary operators on a separable Hilbert space $\mathcal H$. A {\it projective unitary representation} is a mapping $\pi: G \to \mathcal U(\mathcal H)$ such that
\[
\pi(g)\pi(h) = \mu(g,h) \pi(gh) \qquad\text{for all }g,h \in G,
\]
where $\mu: G \times G \to \T$ is a {\it multiplier} of $\pi$. To emphasize the dependence of $\mu$, we also say that $\pi$ is a $\mu$-projective unitary representation. Any multiplier $\mu$ for $G$ must satisfy
\begin{align}
\label{mu1}
&\mu(g_1,g_2g_3)\mu(g_2,g_3)=\mu(g_1g_2, g_3) \mu(g_1,g_2), \qquad g_1,g_2,g_3 \in G,
\\
\label{mu2}
&\mu(g,\mathbf e) = \mu (\mathbf e,g) =1, \qquad g\in G, \text{ where $\mathbf e$ is the group unit of $G$}.
\end{align}

Similar to the case of unitary representations, a prominent role is played by the left (and right) regular projective representations. Let $\mu$ be a multiplier for $G$. For $g\in G$, we define a unitary operator $\lambda_g : \ell^2(G) \to \ell^2(G)$ by
\[
\lambda_g e_h = \mu(g,h) e_{gh}, \qquad h\in G,
\]
where $\{e_g: g\in G\}$ is the standard orthonormal basis of $\ell^2(G)$. By \eqref{mu1} and \eqref{mu2}, $\lambda$ is  a $\mu$-projective unitary representation of $G$, which is called the {\it left-regular $\mu$-projective representation}. Likewise, we could define the right-regular $\mu$-projective representation, but we shall only need its left variant in the proof of Theorem \ref{rep}.

\begin{thm}\label{rep}
Suppose $G$ is a group, $\pi$ is a projective unitary representation of $G$ on a Hilbert space $\Hil$ with a multiplier $\mu$, and $\{\pi(g)f\}_{g\in G}$ is a frame for $\mathcal H$. Then,
\begin{enumerate}
\item\label{repi} There exists a vector $f_{1}\in\Hil$ such that $\{\pi(g)f_{1}\}_{g\in G}$ is a Parseval frame for $\Hil$. If $S$ is the frame operator of $\{\pi(g)f\}_{g\in G}$, then 
\begin{equation}\label{rep1}
S^{1/2}(\pi(g)f_{1}) = \pi(g)f \qquad\text{for all }g\in G.
\end{equation}

\item\label{repii} There exist a $\mu$-projective unitary representation $\mathcal \pi'$ of $G$ on a Hilbert space $\mathcal K$ and a vector $f_{2}\in\mathcal K$ such that $\{\pi'(g)f_{2}\}_{g\in G}$ is a Parseval frame for $\Kil$ and $\{\pi(g)f_{1}\oplus\pi'(g)f_{2}\}_{g\in G}$ is an orthonormal basis for $\Hil\oplus\Kil$.
\end{enumerate}
\end{thm}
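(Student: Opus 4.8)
The plan is to run the strategy behind Theorem~\ref{ug2} with the left-regular $\mu$-projective representation $\lambda$ on $\ell^{2}(G)$ playing the role of the model, and the one genuinely new ingredient will be the bookkeeping of the multiplier $\mu$ via the cocycle identity \eqref{mu1}. For part~\eqref{repi}, I would first show that the frame operator $S$ of $\{\pi(g)f\}_{g\in G}$ commutes with every $\pi(h)$. Writing $Sx=\sum_{g\in G}\langle x,\pi(g)f\rangle\pi(g)f$ and conjugating by $\pi(h)$, the two occurrences of $\pi(h)\pi(g)=\mu(h,g)\pi(hg)$ contribute factors $\mu(h,g)$ and $\overline{\mu(h,g)}$, whose product is $1$ since $\abs{\mu}\equiv 1$; after the substitution $g\mapsto hg$ this gives $\pi(h)S\pi(h)^{*}=S$, hence $\pi(h)S=S\pi(h)$. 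By functional calculus $S^{-1/2}$ then also commutes with each $\pi(h)$, so setting $f_{1}=S^{-1/2}f$ yields $\pi(g)f_{1}=S^{-1/2}\pi(g)f$, and Proposition~\ref{di1} shows $\{\pi(g)f_{1}\}_{g\in G}$ is the canonical Parseval frame. The identity \eqref{rep1} is then immediate from $S^{1/2}\pi(g)f_{1}=\pi(g)S^{1/2}S^{-1/2}f=\pi(g)f$.

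For part~\eqref{repii}, I would let $\Theta:\Hil\to\ell^{2}(G)$ be the analysis operator of the Parseval frame $\{\pi(g)f_{1}\}_{g\in G}$, so that $\Theta$ is an isometry with $\Theta^{*}e_{g}=\pi(g)f_{1}$; in particular $\Theta^{*}e_{\mathbf e}=f_{1}$. The heart of the argument is the intertwining relation $\lambda_{h}\Theta=\Theta\pi(h)$ for all $h\in G$. Comparing the $e_{k}$-coefficients of both sides reduces this to the scalar identity $\mu(h,h^{-1}k)\,\mu(h^{-1},k)=\mu(h,h^{-1})$, which is exactly \eqref{mu1} evaluated at $(g_{1},g_{2},g_{3})=(h,h^{-1},k)$, once one also uses $\pi(\mathbf e)=I$ and $\pi(h)^{-1}=\overline{\mu(h,h^{-1})}\,\pi(h^{-1})$ (both consequences of \eqref{mu2}). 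This multiplier computation is where the projective setting, as opposed to the ordinary unitary-group setting of Theorem~\ref{ug2}, demands genuine care, and I expect it to be the main obstacle.

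Once the intertwining is established, $P=\Theta\Theta^{*}$ is the orthogonal projection of $\ell^{2}(G)$ onto $\Theta(\Hil)$, and since $\Theta(\Hil)$ is $\lambda$-invariant, $P$ commutes with every $\lambda_{h}$. I would then set $\Kil=\Theta(\Hil)^{\bot}$ with complementary projection $Q=I-P$, and define $\pi'(g)=\lambda_{g}|_{\Kil}$; because $\Kil$ is $\lambda$-invariant and $\lambda$ is $\mu$-projective, the restriction $\pi'$ is a $\mu$-projective unitary representation of $G$ on $\Kil$. Put $f_{2}=Qe_{\mathbf e}$. Since $\{\lambda_{g}e_{\mathbf e}\}_{g\in G}=\{e_{g}\}_{g\in G}$ is an orthonormal basis of $\ell^{2}(G)$ and $Q$ commutes with $\lambda$, the system $\{\pi'(g)f_{2}\}_{g\in G}=\{Qe_{g}\}_{g\in G}$ is the image of an orthonormal basis under an orthogonal projection, hence a Parseval frame for $\Kil$.

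Finally, I would assemble the dilation. Because $\Theta f_{1}=\Theta\Theta^{*}e_{\mathbf e}=Pe_{\mathbf e}$, the operator $\Theta\oplus\mathrm{id}_{\Kil}:\Hil\oplus\Kil\to\Theta(\Hil)\oplus\Kil=\ell^{2}(G)$ is a unitary sending $f_{1}\oplus f_{2}$ to $Pe_{\mathbf e}\oplus Qe_{\mathbf e}=e_{\mathbf e}$, and it intertwines $(\pi\oplus\pi')(g)$ with $\lambda_{g}$ (the first summand by the intertwining of the previous paragraph, the second by the definition of $\pi'$). Consequently it carries the orbit $\{\pi(g)f_{1}\oplus\pi'(g)f_{2}\}_{g\in G}$ onto $\{\lambda_{g}e_{\mathbf e}\}_{g\in G}=\{e_{g}\}_{g\in G}$. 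Since this last system is an orthonormal basis and unitaries preserve that property, $\{\pi(g)f_{1}\oplus\pi'(g)f_{2}\}_{g\in G}$ is an orthonormal basis for $\Hil\oplus\Kil$, completing part~\eqref{repii}.
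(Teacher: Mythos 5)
Your proposal is correct and follows essentially the same route as the paper: part (i) via the commutation of $S$ with each $\pi(g)$ and $f_1=S^{-1/2}f$, and part (ii) via the analysis operator of $\{\pi(g)f_1\}_{g\in G}$, the intertwining with the left-regular $\mu$-projective representation (your cocycle identity $\mu(h,h^{-1}k)\mu(h^{-1},k)=\mu(h,h^{-1})$ is exactly the multiplier cancellation the paper performs in its display \eqref{rep2}), the complementary projection with $f_2=(I-P)e_{\mathbf e}$, and assembly by the natural unitary between $\ell^2(G)$ and $\Hil\oplus\Kil$. The only cosmetic differences are that you obtain $P\lambda_g=\lambda_g P$ abstractly from the reducing-subspace argument (the paper verifies it on basis vectors) and define $\pi'$ as a restriction rather than as $(I-P)\lambda_g$; both are sound.
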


\begin{proof} First, we will show that $\pi(g)$ and $S$ commute for all $g\in G$, and thus $\pi(g)$ and $S^{-1/2}$ commute. Let $g\in G$ and $x\in\Hil$
\begin{align*}
\pi(g)Sx & = \pi(g)\left(\sum_{h\in G}\langle x,\pi(h)f\rangle\pi(h)f\right) = \sum_{h\in G}\langle \pi(g)x,\pi(g)\pi(h)f\rangle \pi(g)\pi(h)f\\
 &=\sum_{h\in G}\langle \pi(g)x,\mu(g,h)\pi(gh)f\rangle \mu(g,h)\pi(gh)f = \sum_{h'\in G}\langle \pi(g)x,\pi(h')f\rangle \pi(h')f = S\pi(g)x.
\end{align*}
By Proposition \ref{di1} the system $\{S^{-1/2}\pi(g)f\}_{g\in G}$ is a Parseval frame. Since $S^{-1/2}\pi(g)f=\pi(g)S^{-1/2}f$, by defining $f_{1} = S^{-1/2}f$ we have shown part \eqref{repi}. 

Next we will show \eqref{repii}. Let $\{e_{g}\}_{g\in {G}}$ be the coordinate basis for $\ell^{2}({G})$. Let $\Phi:\Hil\to\ell^{2}( G)$ be  the analysis operator of the Parseval frame $\{\pi(g)f_1\}_{g\in G}$. Consequently, $\Phi$ is an isometry. Let $P$ be the orthogonal projection onto $\Phi(\Hil)$. Let $\lambda$ be the left-regular $\mu$-projective representation of $G$ as defined above.
Note that $\pi(g^{-1}) = \mu(g,g^{-1})\pi^*(g)$. Hence, for $x\in\Hil$ we have
\begin{equation}\label{rep2}\begin{split}
\Phi \pi(g) x & = \sum_{h\in G}\langle \pi(g) x,\pi(h)f_{1}\rangle e_{h} = \sum_{h\in G}\langle x,\overline{\mu(g,g^{-1})}\mu(g^{-1},h)\pi(g^{-1}h)f_{1}\rangle e_{h}
\\
 & = \lambda_{g} \bigg(\sum_{h\in G}\langle x,\pi(g^{-1}h)f_{1}\rangle 
 \mu(g,g^{-1}) \overline{\mu(g,g^{-1}h)\mu(g^{-1},h)} e_{g^{-1}h} \bigg) = \lambda_{g}\Phi x
\end{split}\end{equation}
In the penultimate step we used the identity $\lambda_g e_{g^{-1}h} = \mu(g,g^{-1}h) e_h$ and in the last step we used \eqref{mu1} and \eqref{mu2} to eliminate three multiplier terms. Hence, we have established the identity
\begin{equation}\label{rep2a}
\pi(g) = \Phi^* \lambda_g \Phi \qquad\text{for all }g\in G.
\end{equation}

Since $\{\pi(g)f_{1}\}_{g\in G}$ is a Parseval frame for $\Hil$ and $\Phi$ is an isometry, $\{\Phi\pi(g)f_{1}\}_{g\in G}$ is a Parseval frame for $\Phi(\Hil)$. 
Thus, we have
\begin{equation}\label{rep3}\begin{split}
Pe_{g} & = \sum_{h\in G}\langle Pe_{g},\Phi\pi(h)f_{1}\rangle \Phi\pi(h)f_{1} = \sum_{h\in G}\langle e_{g},P\Phi\pi(h)f_{1}\rangle \Phi\pi(h)f_{1}\\
 & = \Phi \left(\sum_{h\in G}\langle e_{g},\Phi\pi(h)f_{1}\rangle \pi(h)f_{1}\right) = \Phi \left(\sum_{h\in G}\langle \pi(g)f_{1},\pi(h) f_{1}\rangle \pi(h)f_{1}\right) = \Phi(\pi(g)f_{1}).
\end{split}\end{equation}
Using \eqref{rep2} and \eqref{rep3} for any $h\in G$ we have
\[
\lambda_{g}Pe_{h} = \lambda_{g}\Phi(\pi(h)f_{1}) = \Phi(\pi(g)\pi(h)f_{1}) = \mu(g,h) \Phi(\pi(gh)f_{1}) =  \mu(g,h)  Pe_{gh} = P\lambda_{g}e_{h}.
\]
Since $\lambda_{g}P$ and $P\lambda_{g}$ agree on elements of the coordinate basis of $\ell^2(G)$, we have established the commutation relation
\begin{equation}\label{rep3a}
\lambda_g P = P \lambda_g \qquad\text{for all }g\in G.
\end{equation}

For $g\in G$ define $\pi'(g) = (I-P)\lambda_{g}$. Let $\mathbf e \in G$ be the group unit, and define $f_{2} = (I-P)e_{\mathbf e}$.
Using \eqref{rep3a} we have
\[\pi'(g)f_{2} = (I-P)\lambda_{g}f_{2} = (I-P)\lambda_{g}(I-P)e_{\mathbf e} = (I-P)^{2}\lambda_{g}e_{\mathbf e} = (I-P)e_{g}.\]
Since $(I-P)$ is an orthogonal projection, this shows that $\{\pi'(g)f_{2}\}_{g\in G}$ is a Parseval frame for $\Kil=\Phi(\Hil)^{\bot}$. For $g,h\in G$ we have
\begin{align*}
\pi'(gh) & = (I-P)\lambda_{gh} = \mu(g,h) (I-P)\lambda_{g}\lambda_{h} = \mu(g,h) (I-P)^{2}\lambda_{g}\lambda_{h}\\
 & = \mu(g,h)(I-P)\lambda_{g}(I-P)\lambda_{h} = \mu(g,h) \pi'(g)\pi'(h).
\end{align*}
Moreover, for any $g\in G$,
\[
(\pi'(g))^{\ast} = \lambda_{g}^{\ast} (I-P)^{\ast} =\overline{\mu(g,g^{-1})} \lambda_{g^{-1}} (I-P)= \overline{\mu(g,g^{-1})} (I-P)\lambda_{g^{-1}} = \overline{\mu(g,g^{-1})}\pi'(g^{-1}),
\]
which implies that $(\pi'(g))^*\pi'(g) = \pi'(g)(\pi'(g))^* = \pi'(\mathbf e)=I-P$. Thus, by restricting the domain of $\pi'(g)$ to $\Kil$, $\pi'(g)$ becomes a unitary operator on $\Kil$. This shows that $\pi'$ is a $\mu$-projective unitary representation of $G$ on $\Kil$.

Define the map $\Psi:\ell^{2}( G)\to\Hil\oplus\Kil$ by 
\[
\Psi f = \begin{cases}
\Phi^{-1}f & \text{for }f\in \Phi(\Hil), \\
f& \text{for } f\in\Kil.
\end{cases}
\]
Clearly, $\Psi$ is an isometric isomorphism, since $\Phi$ is an isometric isomorphism between $\Hil$ and $\Phi(\Hil)$. Note that 
\[
e_{g} = Pe_g + (I-P)e_g=\Phi(\pi(g)f_{1}) + \pi'(g)f_{2}.
\]
Since $\{e_{g}\}_{g\in G}$ is an orthonormal basis of $\ell^2( G)$, so is $\{\Psi e_{g}\}_{g\in G}$, where
\[
\Psi e_{g} = \pi(g)f_{1}\oplus\pi'(g)f_{2}.
\]
This completes the proof of Theorem \ref{rep}.
\end{proof}

We shall illustrate Theorem \ref{rep} by showing that any Gabor type unitary system has a dilation property. For $(t,s) \in \R^n \times \R^n$, define the time-frequency (Gabor representation) $\pi_0$ on $L^2(\R^n)$ by
\[
\pi(t,s) = M_s T_t,
\]
where $M_s f(x) = e^{i \langle x, s \rangle }f(x)$ and $T_t f(x) =f (x-t)$ for $f\in L^2(\R^n)$. Let $G$ be a full rank lattice in $\R^n \times \R^n$, that is $G= P(\Z^n \times \Z^n)$ for some $(2n) \times (2n)$ invertible real matrix. Observe that
\[
\pi_0(t,s)\pi_0(t',s') = e^{-i \langle t,s' \rangle} \pi_0(t+t', s+s').
\]
Thus, the Gabor representation $\pi |_G$ is a projective unitary representation with the multiplier
\begin{equation}\label{mult}
\mu((t,s),(t',s')) =  e^{-i \langle t,s' \rangle}
\qquad\text{for }(t,s), (t',s') \in G \subset \R^n \times \R^n.
\end{equation}
For a general Hilbert space $\Hil$ we adopt the following notion.

\begin{defn}
Let $\mathcal H$ be a separable Hilbert space and $G$ be a a full rank lattice in $\R^n \times \R^n$.  We say that $\pi$ is a {\it Gabor type representation} of the lattice $G$, if $\pi: G \to \mathcal U (\mathcal H)$ is a $\mu$-projective unitary representation with the multiplier $\mu$ given by \eqref{mult}.
\end{defn}

As an immediate application of Theorem \ref{rep} we have

\begin{thm}\label{rep-gab}
Suppose $G$ is a full rank lattice and $\{\pi(g)f\}_{g\in G}$ is a Gabor frame for $\mathcal H$ with optimal bounds $A^{2}$ and $B^{2}$. Then, there exist:
\begin{enumerate}
\item\label{rep-gi} a Gabor type representation $\mathcal \pi'$ of the lattice $G$ on a Hilbert space $\mathcal K$,
\item\label{rep-gii} a complete wandering vector $ e=f_1 \oplus f_2 \in\Hil \oplus \mathcal K$ for $\mathcal \pi \oplus \mathcal \pi'$, and
\item\label{rep-giii} a positive operator $E:\Hil \to\Hil$ with $\{A,B\}\subseteq\sigma(E)\subseteq [A,B]$, 
such that
\begin{equation}\label{rep-g1}
E(\pi(g)f_{1}) = \pi(g)f \qquad\text{for all }g\in G.
\end{equation}
\end{enumerate}
\end{thm}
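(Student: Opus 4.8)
The plan is to derive this statement directly from Theorem \ref{rep}, since the hypotheses match almost verbatim. Because $G$ is a full rank lattice and $\pi$ is a Gabor type representation of $G$, the system $\{\pi(g)f\}_{g\in G}$ is by definition a frame generated by a $\mu$-projective unitary representation whose multiplier $\mu$ is the Gabor multiplier \eqref{mult}. Thus Theorem \ref{rep} applies with no modification, and the entire argument reduces to repackaging its two conclusions into the form asked for here.

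First I would invoke Theorem \ref{rep}\eqref{repi} to produce $f_1 = S^{-1/2}f$, where $S$ is the frame operator of $\{\pi(g)f\}_{g\in G}$, so that $\{\pi(g)f_1\}_{g\in G}$ is a Parseval frame and $S^{1/2}(\pi(g)f_1) = \pi(g)f$ for all $g\in G$. Setting $E = S^{1/2}$, which is a positive operator on $\Hil$, immediately yields \eqref{rep-g1} and establishes part \eqref{rep-giii}, modulo the spectral claim. For the latter, I would recall that the frame operator obeys $A^2\norm{x}^2 \le \langle Sx,x\rangle \le B^2\norm{x}^2$, and, because $A^2$ and $B^2$ are the \emph{optimal} frame bounds, one has $\min\sigma(S)=A^2$ and $\max\sigma(S)=B^2$, so $\sigma(S)\subseteq[A^2,B^2]$ with both endpoints attained. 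Applying the spectral mapping theorem to the square root then gives $\sigma(E)\subseteq[A,B]$ together with $\{A,B\}\subseteq\sigma(E)$, as required.

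Next I would apply Theorem \ref{rep}\eqref{repii} to obtain a $\mu$-projective unitary representation $\pi'$ of $G$ on a Hilbert space $\mathcal K$ and a vector $f_2\in\mathcal K$ such that $\{\pi(g)f_1\oplus\pi'(g)f_2\}_{g\in G}$ is an orthonormal basis of $\Hil\oplus\mathcal K$. The only point requiring comment is that $\pi'$ is again a \emph{Gabor type} representation: since Theorem \ref{rep}\eqref{repii} guarantees that $\pi'$ carries exactly the same multiplier $\mu$ as $\pi$, and this $\mu$ is the Gabor multiplier \eqref{mult}, $\pi'$ is a Gabor type representation of $G$ by definition, giving part \eqref{rep-gi}. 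Finally, because $\pi$ and $\pi'$ share the multiplier $\mu$, their direct sum $\pi\oplus\pi'$ is itself a $\mu$-projective representation and $(\pi\oplus\pi')(g)(f_1\oplus f_2) = \pi(g)f_1\oplus\pi'(g)f_2$; hence $e=f_1\oplus f_2$ is a complete wandering vector for $\pi\oplus\pi'$, proving part \eqref{rep-gii}.

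There is no genuine obstacle, as this really is a corollary. The only step that demands care rather than mere citation is the translation between the two bookkeeping conventions: passing from the frame-operator identity $S^{1/2}(\pi(g)f_1)=\pi(g)f$ of Theorem \ref{rep} to the explicit positive operator $E$ with prescribed spectrum $\{A,B\}\subseteq\sigma(E)\subseteq[A,B]$, which rests precisely on the optimality of the frame bounds together with the spectral mapping theorem.
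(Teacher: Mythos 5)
Your proposal is correct and follows exactly the route the paper intends: the paper states Theorem \ref{rep-gab} with no separate proof, presenting it as an immediate application of Theorem \ref{rep}, which is precisely what you do (taking $E=S^{1/2}$, using optimality of the frame bounds plus the spectral mapping theorem for the spectral claim, and noting that $\pi'$ inherits the Gabor multiplier \eqref{mult} and hence is of Gabor type). Your write-up in fact supplies the bookkeeping details the paper leaves implicit, and all of them check out.
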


In the case when $G = a\Z \times b \Z$, $a,b>0$, a Gabor type representation is uniquely determined by two generators $\{M,T\}$ satisfying $TM= e^{-iab}MT$. Indeed, $\pi(an,bm)= M^m T^n$, where $M=\pi(0,b)$ and $T=\pi(a,0)$. In this case, we can simply say that $\{M^mT^n: m,n\in\Z\}$ is a Gabor type unitary system of $G = a\Z \times b \Z$.
Then, Theorem \ref{rep-gab} yields Corollary \ref{gab} which generalizes the result of Han and Larson \cite[Theorem 4.8]{hl} from the setting of Parseval frames to general non-tight frames.

\begin{cor}\label{gab}
Let $\mathcal{U}=\{M^mT^n: m,n\in\Z\}$ be a Gabor type unitary system on a Hilbert space $\Hil$ of the lattice $a\Z \times b \Z$. Suppose that $f \in\Hil$ is a frame vector for $\mathcal U$ with optimal bounds $A^{2}$ and $B^{2}$.
Then, there exist:
\begin{enumerate}
\item 
a Gabor type unitary system $\mathcal{U}'=\{(M')^m(T')^n: m,n\in\Z\}$ on a Hilbert space $\mathcal{K}$ of the lattice $a\Z \times b \Z$,
\item
a complete wandering vector $e=f_1 \oplus f_2 \in\Hil \oplus \mathcal K$ for 
\[
\mathcal U \oplus \mathcal U':=\{(M^mT^n) \oplus ((M')^m(T')^n): m,n\in\Z\}, {\it{ and}}
\]
\item
a positive operator $E:\Hil \to\Hil$ with $\{A,B\}\subseteq\sigma(E)\subseteq [A,B]$, 
such that
\[
E(M^mT^nf_1) = M^mT^nf \qquad\text{for all }m,n \in \Z.
\]
\end{enumerate}
\end{cor}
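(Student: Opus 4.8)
The plan is to reduce Corollary \ref{gab} to Theorem \ref{rep-gab} by translating between the abstract two-generator system $\{M^mT^n\}$ and a genuine $\mu$-projective representation of the lattice $G = a\Z\times b\Z$. First I would define $\pi: G \to \mathcal U(\Hil)$ by $\pi(an,bm) = M^mT^n$, following the recipe indicated just before the statement, and verify that this is a $\mu$-projective unitary representation with the Gabor multiplier \eqref{mult}. The only computation here uses the commutation relation $TM = e^{-iab}MT$: pushing $T^n$ through $M^{m'}$ produces the scalar $e^{-iabnm'}$, so that $\pi(an,bm)\pi(an',bm') = e^{-iabnm'}\pi(a(n+n'),b(m+m'))$, and $e^{-iabnm'} = e^{-i\langle an, bm'\rangle} = \mu((an,bm),(an',bm'))$. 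Hence $\pi$ is exactly a Gabor type representation of $G$, and $\{\pi(g)f\}_{g\in G} = \{M^mT^nf\}_{m,n}$ is its associated Gabor frame with optimal bounds $A^2$ and $B^2$.

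Next I would apply Theorem \ref{rep-gab} directly to this $\pi$ and $f$. This yields a Gabor type representation $\pi'$ of $G$ on a Hilbert space $\mathcal K$, a complete wandering vector $e = f_1\oplus f_2 \in \Hil\oplus\mathcal K$ for $\pi\oplus\pi'$, and a positive operator $E:\Hil\to\Hil$ with $\{A,B\}\subseteq\sigma(E)\subseteq[A,B]$ satisfying $E(\pi(g)f_1) = \pi(g)f$ for all $g\in G$.

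The final step is to run the dictionary in reverse to recover the generator description demanded in the statement. I would set $M' = \pi'(0,b)$ and $T' = \pi'(a,0)$; because $\pi'$ carries the same multiplier \eqref{mult}, the same computation as above shows $T'M' = e^{-iab}M'T'$ and $(M')^m(T')^n = \pi'(an,bm)$, so $\mathcal U' = \{(M')^m(T')^n: m,n\in\Z\}$ is a Gabor type unitary system of the lattice $a\Z\times b\Z$, which gives (i). The wandering vector statement (ii) is then immediate, since under the identification $\pi\oplus\pi'$ equals $\mathcal U\oplus\mathcal U'$ acting by $(M^mT^n)\oplus((M')^m(T')^n)$, so $e = f_1\oplus f_2$ is complete wandering for $\mathcal U\oplus\mathcal U'$. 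Finally, the operator $E$ transfers verbatim: writing $\pi(an,bm)f_1 = M^mT^nf_1$ and $\pi(an,bm)f = M^mT^nf$ turns $E(\pi(g)f_1) = \pi(g)f$ into $E(M^mT^nf_1) = M^mT^nf$, which is (iii).

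I do not expect a genuine obstacle, since the result is a specialization of Theorem \ref{rep-gab}; the only point requiring care is confirming that the abstract relation $TM = e^{-iab}MT$ is precisely equivalent to the Gabor multiplier \eqref{mult}, so that the correspondence between two-generator systems and $\mu$-projective representations of $a\Z\times b\Z$ is an exact bijection in both directions.
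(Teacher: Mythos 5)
Your proposal is correct and is essentially the paper's own argument: the paper proves Corollary \ref{gab} by exactly this dictionary, setting $\pi(an,bm)=M^mT^n$ with $M=\pi(0,b)$, $T=\pi(a,0)$, noting that the relation $TM=e^{-iab}MT$ is equivalent to the multiplier \eqref{mult}, and then invoking Theorem \ref{rep-gab}. Your write-up merely makes explicit the multiplier verification and the reverse translation for $\pi'$, which the paper leaves implicit.
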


\section{Dilations of frame wavelets}

In this section we generalize the result of Dutkay, Han, Picioroaga, and Sun \cite{dhps} from the setting of Parseval wavelets to non-tight frame wavelets. We find optimal conditions on non-tight frame wavelets for which a generalization of \cite[Theorem 2.6]{dhps} holds. It turns out that our dilation result is not possible for arbitrary non-tight frame wavelets. This marks a significant distinction between the previously considered situations of Gabor systems and unitary groups and the case of wavelets. In particular, one can not expect that every dilation result will extend automatically from the tight to the non-tight setting.

Following \cite{dhps} we adopt the following definition.
The Baumslag-Solitar group \cite{bs} is given by the group presentation
\[
BS(1,2):=\langle d,t | dtd^{-1} = t^2 \rangle.
\]
Thus, any unitary representation of $BS(1,2)$ is given by two unitary operators $D$ and $T$ on some Hilbert space $\Hil$, that satisfy the relation $DTD^{-1} = T^2$.

\begin{defn}
Let $\{D,T\}$ be a representation of the Baumslag-Solitar group $BS(1,2)$ on a Hilbert space $\Hil$. We say that $\psi \in \Hil$ is a frame (or orthonormal) wavelet for $\{D,T\}$ if $\{D^jT^k \psi: j,k\in \Z\}$ is a  frame (or orthonormal basis) for $\Hil$, respectively.
\end{defn}

In order to state our dilation theorem for frame wavelets we need one more notion. Following \cite{hl}, we define the local commutant of a set of unitary operators  $\mathcal U$ of $\Hil$ at $\psi\in \Hil$ as
\[
C_\psi(\mathcal U) = \{T \in B (\mathcal H): TU \psi=UT \psi \quad\forall U \in\mathcal U\}.
\]

\begin{thm}\label{wave}
Let $\{D,T\}$ be a representation of the Baumslag-Solitar group $BS(1,2)$ on a Hilbert space $\Hil$, and let $\psi \in\Hil$ be a frame wavelet for $\{D,T\}$ with optimal bounds $A^{2}$ and $B^{2}$. Let $S$ be the frame operator of $\{D^jT^k \psi: j,k\in \Z\}$, and
assume that $S^{-1/2}$ is in the local commutant
\begin{equation}\label{s-}
S^{-1/2} \in \mathcal C_\psi(\{D^jT^k: j,k\in\Z\}).
\end{equation}
Then, there exist:
\begin{enumerate}
\item 
a representation $\{D',T'\}$ of $BS(1,2)$ on a Hilbert space $\mathcal K$,
\item
an orthonormal wavelet $\psi_1 \oplus \psi_2 \in\Hil \oplus \mathcal K$ for 
the representation $\{D\oplus D',T \oplus T'\}$ of $BS(1,2)$ on $\Hil \oplus \mathcal K$, and
\item
a positive operator $E:\Hil \to\Hil$ with $\{A,B\}\subseteq\sigma(E)\subseteq [A,B]$, 
such that
\begin{equation}\label{se}
E(D^jT^k \psi_1) = D^jT^k\psi \qquad\text{for all }j,k \in \Z.
\end{equation}
\end{enumerate}
Conversely, if (i)--(iii) hold, then \eqref{s-} necessarily holds.
\end{thm}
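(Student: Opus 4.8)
The plan is to reduce the forward direction to the Parseval dilation theorem of Dutkay, Han, Picioroaga, and Sun \cite[Theorem 2.6]{dhps}, using hypothesis \eqref{s-} precisely to guarantee that the canonical Parseval frame is again a wavelet; for the converse, I would show that the operator $E$ in (iii) is forced to equal $S^{1/2}$, and that this identity encodes the commutation relation \eqref{s-}.

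For the forward direction I would set $\psi_1 = S^{-1/2}\psi$. By Proposition \ref{di1}, the canonical Parseval frame of $\{D^jT^k\psi\}_{j,k}$ is $\{S^{-1/2}(D^jT^k\psi)\}_{j,k}$, and hypothesis \eqref{s-} says exactly that $S^{-1/2}(D^jT^k\psi) = D^jT^k(S^{-1/2}\psi) = D^jT^k\psi_1$ for all $j,k$. Hence $\{D^jT^k\psi_1\}_{j,k}$ is a Parseval frame, i.e.\ $\psi_1$ is a Parseval frame wavelet for $\{D,T\}$. Applying \cite[Theorem 2.6]{dhps} then yields a representation $\{D',T'\}$ on a Hilbert space $\mathcal K$ and a vector $\psi_2 \in \mathcal K$ making $\psi_1 \oplus \psi_2$ an orthonormal wavelet for $\{D\oplus D', T \oplus T'\}$, which gives (i) and (ii). Finally I would take $E = S^{1/2}$. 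Since the optimal bounds are $A^2$ and $B^2$, we have $\{A^2,B^2\}\subseteq\sigma(S)\subseteq[A^2,B^2]$, so $E$ is positive with $\{A,B\}\subseteq\sigma(E)\subseteq[A,B]$; and \eqref{se} is immediate, since $E(D^jT^k\psi_1) = S^{1/2}S^{-1/2}(D^jT^k\psi) = D^jT^k\psi$.

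For the converse, suppose (i)--(iii) hold. Projecting the orthonormal basis $\{D^jT^k\psi_1 \oplus (D')^j(T')^k\psi_2\}_{j,k}$ of $\Hil\oplus\mathcal K$ onto $\Hil$ shows that $\{D^jT^k\psi_1\}_{j,k}$ is a Parseval frame for $\Hil$. A short computation then identifies the frame operator of $\{E(D^jT^k\psi_1)\}_{j,k}$ as $E^2$: using that $E$ is self-adjoint and that $\{D^jT^k\psi_1\}$ is Parseval, $\sum_{j,k}\langle g, E D^jT^k\psi_1\rangle E D^jT^k\psi_1 = E\big(\sum_{j,k}\langle Eg, D^jT^k\psi_1\rangle D^jT^k\psi_1\big) = E^2 g$. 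By \eqref{se} this frame equals $\{D^jT^k\psi\}_{j,k}$, whose frame operator is $S$, so $E^2 = S$ and hence $E = S^{1/2}$ by positivity. Rewriting \eqref{se} as $S^{1/2}(D^jT^k\psi_1) = D^jT^k\psi$ and applying $S^{-1/2}$ gives $D^jT^k\psi_1 = S^{-1/2}(D^jT^k\psi)$; taking $j=k=0$ shows $\psi_1 = S^{-1/2}\psi$, whence $D^jT^k(S^{-1/2}\psi) = S^{-1/2}(D^jT^k\psi)$ for all $j,k$, which is \eqref{s-}.

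The main obstacle is conceptual rather than computational. Unlike the settings of Theorems \ref{ug} and \ref{rep}, where the averaging argument shows that $S$ commutes with every group element, here $BS(1,2)$ is non-abelian and $S$ need not commute with $D$ and $T$. Consequently the canonical Parseval frame $\{S^{-1/2}D^jT^k\psi\}$ need not be a wavelet, and the dilation of \cite{dhps} cannot be invoked directly. Hypothesis \eqref{s-} is exactly the condition repairing this defect, and the converse confirms that it is not merely sufficient but necessary — this equivalence is what pins down the optimal class of frame wavelets admitting an orthonormal dilation.
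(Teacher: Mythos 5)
Your proposal is correct and follows essentially the same route as the paper's own proof: setting $\psi_1 = S^{-1/2}\psi$, using \eqref{s-} to see $\psi_1$ is a Parseval wavelet, invoking \cite[Theorem 2.6]{dhps}, and taking $E = S^{1/2}$; the converse likewise matches, computing the frame operator of $\{E D^jT^k\psi_1\}$ to force $E^2 = S$ and then recovering \eqref{s-}. Your additional remarks (the spectral containment from the optimal bounds, and the Parseval property of $\{D^jT^k\psi_1\}$ via projecting the orthonormal basis) only make explicit what the paper leaves implicit.
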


\begin{proof}
Define $\psi_1 = S^{-1/2} \psi$. By our assumption \eqref{s-},
\begin{equation}\label{s-1}
S^{-1/2} (D^j T^k \psi) = D^j T^k \psi_1 \qquad\text{for all }j,k\in \Z.
\end{equation}
Since, $\{S^{-1/2} (D^j T^k \psi): j,k\in \Z\}$ is a Parseval frame in $\mathcal H$, $\psi_1$ is a Parseval wavelet for $\{D,T\}$. Applying \cite[Theorem 2.6]{dhps} yields a representation $\{D',T'\}$ of $BS(1,2)$ on a Hilbert space $\mathcal K$ satisfying (ii). Moreover, (iii) holds for $E:=S^{1/2}$ by \eqref{s-1}.

Conversely, assume (i)--(iii). By \eqref{se} the frame operator $S$ satisfies for $f\in\mathcal H$,
\[
\begin{aligned}
Sf = \sum_{j,k\in \Z} \langle f, D^jT^k\psi \rangle D^j T^k \psi &
= \sum_{j,k\in \Z} \langle f, E D^jT^k\psi_1 \rangle E D^j T^k \psi_1
\\
& = E\bigg(  \sum_{j,k\in \Z} \langle Ef, D^jT^k\psi_1 \rangle D^j T^k \psi_1 \bigg) =E^2f.
\end{aligned}
\]
In the last step we used the fact that $\{D^j T^k \psi_1: j,k\in \Z\}$ is a Parseval frame in $\mathcal H$, which is a consequence of (ii). Hence, $E=S^{1/2}$. Letting $j=k=0$ in \eqref{se} yields $S^{1/2}\psi_1= \psi$. Thus, \eqref{se} takes the form of \eqref{s-1}. By definition, \eqref{s-1} is equivalent to the local commutant property \eqref{s-}. This completes the proof of Theorem \ref{wave}.
\end{proof}

\begin{remark}
The local commutant assumption \eqref{s-} is a convenient way of stating that the canonical Parseval frame of the affine system $\{D^j T^k \psi: j,k\in \Z\}$ is also an affine system $\{D^j T^k \psi_1: j,k\in \Z\}$ for some $\psi_1 \in\mathcal H$.
If $\psi$ is a Parseval wavelet for $\{D,T\}$, then \eqref{s-} is automatically satisfied since $S=Id$. Thus, Theorem \ref{wave} provides a generalization of the result of Dutkay, Han, Picioroaga, and Sun \cite[Theorem 2.6]{dhps}. At the same time, Theorem \ref{wave} asserts that dilation results  of this kind are only possible for frame wavelets whose canonical Parseval frame is an affine system. This is in a stark contrast with Gabor systems, where Corollary \ref{gab} holds regardless of such a priori assumption. In retrospect, this is not that surprising since the canonical Parseval frame of a Gabor system is always known to be a Gabor system.
\end{remark}

\bibliographystyle{amsplain}

\begin{thebibliography}{99}


\bibitem{amrs}
J. Antezana, P. Massey, M. Ruiz, D. Stojanoff,
{\it The Schur-Horn theorem for operators and frames with prescribed norms and frame operator},
Illinois J. Math. {\bf 51} (2007), 537--560.

\bibitem{bs}
G. Baumslag, D. Solitar,
{\it Some two-generator one-relator non-Hopfian groups},
Bull. Amer. Math. Soc. {\bf 68} (1962), 199--201.


\bibitem{bj}
M.~Bownik, J.~Jasper,
{\it Characterization of sequences of frame norms},
J. Reine Angew. Math. (to appear).

\bibitem{dhps}
D.~Dutkay, D.~Han, G.~Picioroaga, Q.~Sun,
{\it Orthonormal dilations of Parseval wavelets},
Math. Ann. {\bf 341} (2008), 483--515.

\bibitem{GH}
J.-P. Gabardo, D.~Han, 
{\it Frame representations for group-like unitary operator systems},
J. Operator Theory {\bf 49} (2003),  223--244. 

\bibitem{hl}
D.~Han, D.~Larson,
{\it Frames, bases and group representations},
Mem. Amer. Math. Soc. {\bf 147} (2000), no. 697, x+94 pp.

\bibitem{HL08}
D.~Han, D.~Larson,
Frame duality properties for projective unitary representations, 
Bull. Lond. Math. Soc. {\bf 40} (2008), 685--695.

\bibitem{Va}
V. S. Varadarajan,
{\it Geometry of quantum theory},
2nd edn (Springer, New York, 1985).

\end{thebibliography}

\end{document}